\newtheorem{theorem}{Theorem}[section]
\newtheorem{lemma}{Lemma}[section]
\newtheorem{proposition}{Proposition}[section]
\newtheorem{remark}{Remark}[section]
\newtheorem{definition}{Definition}[section]
\numberwithin{equation}{section}
      \newcommand{\tw}{\widetilde w}
   \newcommand{\tW}{\widetilde W}
      \newcommand{\R}{{\mathbb{R}}}
      \newcommand{\dive}{\operatorname{div}}
      \newcommand{\loc}{\operatorname{loc}}
      \newcommand{\eps}{\varepsilon}
      \newcommand{\mR}{\mathbb{R}}
      \newcommand{\supp}{\operatorname{supp}}
      \newcommand{\dsp}{\displaystyle}
      \def\@setcopyright{}
      \def\serieslogo@{}
\begin{document}

    \author[H.-M. Nguyen]{Hoai-Minh Nguyen}
\author[T. Nguyen]{Tu Nguyen}

\address[H.-M. Nguyen]{Department of Mathematics, EPFL SB CAMA, Station 8,  \newline\indent
	 CH-1015 Lausanne, Switzerland.}
\email{hoai-minh.nguyen@epfl.ch}

\address[T. Nguyen]{Institute of Mathematics, Vietnam Academy of Science and Technology,
	18 Hoang Quoc Viet, Cau Giay, Hanoi, Vietnam}
\email{natu@math.ac.vn}
\thanks{The second author is funded by the Vietnam National Foundation for Science and Technology Development (NAFOSTED) under grant number 101.02-2015.21}

 \title[Approximate cloaking for the heat equation via transformation optics]{Approximate cloaking for the heat equation via transformation optics}

  \maketitle

% First we specify the top matter (author, title, etc).
%
% Note: All of the top matter items are optional and can be omitted.
% But you will probably want to specify at least the author and title
% and perhaps an abstract.

   % author information

   % first author 

   \author{}
   \address{}
   \email{}

   % second author

   \author{}

   % the address where the research was carried out
   \address{}

   % current address, usually not needed because it is the same as the
   % regular address
   \curraddr{}

   \email{}
   
   % title

 %  \title[cloaking for maxwell's equations]{approximate cloaking for time-dependent maxwell equations
%    via transformation optics}

%   \begin{abstract}
% 
%   \end{abstract}

   \dedicatory{}

   % today's date, or fill in whatever date you prefer
   \date{\today}

 %  \maketitle

%\tableofcontents

% =======================================================================
% =======================================================================
% ======================================================================
%\section{Introduction and the statements of results}
%\noindent Presumption: The readers know what cloaking is.

\begin{abstract} In this paper, we establish approximate cloaking for the heat equation via transformation optics. We show that the degree of visibility is of the order $\eps$ in three dimensions and $|\ln \eps|^{-1}$ in two dimensions, where $\eps$ is the regularization parameter. To this end,
we first transform the problem in time domain into  a family of problems in frequency domain by taking the Fourier transform with respect to time, and 
then derive appropriate estimates in the frequency domain. 
\end{abstract}

\noindent {\bf Key words}: heat equation,  approximate cloaking,  frequency analysis,. 

\noindent {\bf MSC}: 35A05, 35B40, 35K45. 

\section{Introduction and statement of the results}

Cloaking using transformation optics (changes of variables) was introduced by Pendry, Schurig, and Smith \cite{Pendry} for the Maxwell system and by Leonhardt \cite{Leo} in the geometric optics setting.  These authors used a singular change of variables, which blows up a point into a
cloaked region. The same transformation had been used 
 to establish (singular) non-uniqueness in Calderon's problem in \cite{Green}. To avoid using
the singular structure, various regularized schemes have been proposed. One of them  was suggested by Kohn, Shen, Vogelius, and Weinstein  \cite{Kohn},  where instead of a point, a small ball  of radius $\eps$ is blown up to the cloaked region.  Approximate cloaking for acoustic waves  has been studied  in the quasistatic regime  \cite{Kohn, Ng-Vogelius-1},    the time harmonic regime \cite{ Kohn1, Ng-1, Ng-Vogelius-2, Ng-2},  and the time regime \cite{Ng-Vogelius-3, Ng-Vogelius-4},  and approximate cloaking for  electromagnetic waves has been studied in  the time harmonic regime  \cite{Ammari13, Lassas, MinhLoc}, see also the references therein. Finite energy solutions for the singular scheme  have been studied  extensively  \cite{GKLU07-1, Weder1, Weder2}. There are also other ways to achieve cloaking effects, such as the use of  plasmonic coating \cite{AE05},  active exterior sources \cite{VMO},  complementary media \cite{LaiChenZhangChanComplementary, Ng-Negative-Cloaking}, or via localized resonance  \cite{Ng-CALR-O} (see also \cite{MiltonNicorovici, Ng-CALR}).

The goal of this paper is to investigate approximate cloaking for the the heat equation using transformation optics.  Thermal cloaking via transformation optics was initiated by Guenneau, Amra, and Venante  \cite{GAV}. Craster, Guenneau, Hutridurga, and Pavliotis  \cite{CG} investigate 
the approximate cloaking for the heat equation using the approximate scheme in the spirit of \cite{Kohn}. They show that for the time large enough, the largeness depends on $\eps$, the degree of visibility is of the order $\eps^d$ ($d = 2, 3$) for  sources that are  independent of time. Their analysis is first based on the fact that as time goes to infinity, the solutions converge to the stationary states and then uses known results on approximate cloaking in the quasistatic regime \cite{Kohn, Ng-Vogelius-1}. 

  In this paper, we show that approximate cloaking is achieved at any positive time and established the degree of invisibility of order $\eps$ in three dimensions and $|\ln \eps|^{-1}$ in two dimensions. Our results hold for a general source that  depends on both time and space variables,  and our estimates are independent of  the content of the materials inside the cloaked region. The degree of visibility obtained herein is optimal due to the fact that a finite time interval  is considered (compare with \cite{CG}).

%Previous works worthy mentioned are \cite{} and the references therein. In this paper, we follow the approximate cloak approach.  We show that approximate cloaking for the heat equation is achieved for a general source outside the cloaking device and the cloaked region. Moreover, the degree of visibility is of the the order $\rho$ in three dimensions and of the order $|\ln \rho|^{-1}$

We next describe the problem in more detail  and state the main result. Our starting point is the regularization  scheme  \cite{Kohn} in which a transformation blows  up a small ball $B_\eps$ ($0< \eps < 1/2$) instead of a point into the cloaked region $B_1$ in $\mR^d$ ($d=2, 3$). Here and in what follows, for $r > 0$,  $B_r$ denotes the ball centered at the origin and of radius $r$ in $\mR^d$. Our assumption on the geometry of the cloaked region is mainly to  simplify  the notations. 
Concerning the transformation, we consider the map  $F_{\eps}: \R^d \rightarrow \R^d$ defined by
\begin{align}\label{def-Frho} F_{\eps} (x)= \left\{ \begin{array}{cl} x &\text{ in } \R^d \setminus B_2, \\[6pt] 
\dsp  \left( \frac{2 - 2\eps}{2-\eps} + \frac{|x|}{2-\eps} \right)\frac{x}{|x|} &\text{ in } B_2 \setminus B_{\eps},\\[6pt]
\dsp \frac{x}{\eps} &\text{ in } B_{\eps}.\end{array} \right. \end{align}
In what follows, we use the standard notations
\begin{equation}\label{pushforward}
F_*A(y) = \frac{\nabla F (x) A(x) \nabla F^T(x)}{ | \det \nabla F(x) |}, \quad F_*\rho (y) = \frac{\rho(x)}{ | \det \nabla F(x) |}, \quad x = F^{-1}(y),
\end{equation}
for the ``pushforward" of a symmetric, matrix-valued function $A$, and a scalar function  $\rho$,  by the diffeomorphism $F$,  and $I$ denotes the identity matrix. 
The cloaking device  in the region $B_2 \setminus B_1$  constructed from the transformation technique  is given  by 
\begin{equation}\label{TO--S}
\big( {F_{\eps}}_*I, {F_{\eps}}_*1 \big) \mbox{ in } B_2 \setminus B_1, 
\end{equation}
a pair of a matrix-valued function  and a function that characterize the material properties in $B_2 \setminus B_1$. Physically, $A$ is the thermal diffusivity  and $\rho$ is the mass density of the material. 

Let  $\Omega$ with $B_2 \subset \subset \Omega \subset \mR^d$ ($ d =2, 3$)\footnote{The notation $D \subset \subset \Omega$ means that the closure of $D$ is a subset of $\Omega$.}  be a bounded  region for which the  heat flow is considered. 
Suppose that the medium outside $B_2$ (the cloaking device and the cloaked region) is homogeneous so that it is characterized by the pair $(I, 1)$,  and the cloaked region is characterized by a pair $(a_O, \rho_O)$ where 
$a_O$ is a matrix-valued function and $\rho_O$ is a real function, both defined in $B_{1}$.  The medium in the whole space is then given by  
\begin{equation}\label{med-CH}
(A_c, \rho_c) = \left\{\begin{array}{cl} (I, 1) & \mbox{ in } \mR^d \setminus B_2, \\[6pt]
\big( {F_{\eps}}_*I, {F_{\eps}}_*1 \big)  & \text{ in } B_2 \setminus  B_1,\\[6pt]
(a_O, \rho_O)  &\text{ in } B_{1/2}.
\end{array} \right. 
\end{equation}
In what follows,  we make the usual assumption that  $a_O$ is symmetric and uniformly elliptic, i.e., for a.e. $x \in B_{1/2}$, 
\begin{equation}\label{elliptic}
\Lambda^{-1}|\xi|^2 \le \langle a_O(x)\xi, \xi \rangle \le \Lambda|\xi|^2\quad  \mbox{ for all } \, \xi \in \mR^d,
\end{equation}
for some $\Lambda \ge 1$, 
and $\sigma$ is a positive function bounded above and below by positive constants.  

For $0 < T \le \infty$, we denote 
$$
\Omega_T = (0, T) \times \Omega. 
$$
Given a function $f \in L^1 \big((0, + \infty), L^2(\Omega) \big)$ and an initial condition  $u_0 \in L^2(\Omega)$, in the medium characterzied by $(A_c, \rho_c)$, one obtains a unique weak solution $u_c \in L^2\big((0, \infty); H^1(\Omega)\big)$ \\ $\cap C \big( [0,+ \infty), L^2(\Omega) \big)$ of the equation 
\begin{equation}\label{eq-uc-1}
\left\{\begin{array}{cl}
\partial_t (\rho_c u_c ) - \dive (A_c \nabla u_c) = f  & \mbox{ in }  \Omega_\infty, \\[6pt]
u_c = 0 & \mbox{ on } (0, +\infty) \times \partial \Omega, \\[6pt]
u_c (t = 0, \cdot ) = u_0 & \mbox{ in } \Omega, 
\end{array}\right. 
\end{equation}
and in the homogeneneous medium characterized by $(I, 1)$, one gets a unique weak solution $u \in L^2\big((0, \infty); H^1(\Omega) \big) \cap C \big( [0,+ \infty), L^2(\Omega) \big)$  of the equation 
\begin{equation}\label{eq-u}
\left\{\begin{array}{cl}
\partial_t u - \Delta u  = f  & \mbox{ in }  \Omega_\infty, \\[6pt]
u_c = 0 & \mbox{ on } (0, +\infty) \times \partial \Omega, \\[6pt]
u_c (t = 0, \cdot ) = u_0 & \mbox{ in } \Omega. 
\end{array}\right. 
\end{equation}

%The existence and uniqueness of $u_c$ and $u$ are standard, see, e.g., \cite{Evans}. 

\medskip 
The approximate cloaking meaning of the scheme \eqref{med-CH}  is given in  the following result:

\begin{theorem} \label{thm1} Let $u_0 \in L^2(\Omega)$ and  $f \in L^1\big( (0, + \infty); L^2(\Omega) \big)$ be such that $\supp f(t, \cdot) \subset \Omega \setminus B_2$ for $t>0$.  Assume that $u_c$ and $u$ are the solution of \eqref{eq-uc-1} and \eqref{eq-u} respectively. 
Then, for $0< \eps<1/2$,
	\begin{equation*}
	\| u_c(t) - u(t) \|_{H^1(\Omega \setminus B_2)} \le C e(\eps, d)  \Big( \|  f\|_{L^1_tL^2(\Omega)}   + \| u_0\|_{L^2(\Omega)} \Big), 
	\end{equation*}
for some positive constant $C$ independent of $f$, $u_0$, and $\eps$, where 
$$
e(\eps, d) = \left\{ \begin{array}{cl} \eps &  \mbox{ if } d = 3,  \\[6pt]
 |\ln \eps |^{-1} &  \mbox{ if } d = 2. 
 \end{array}\right.
$$
\end{theorem}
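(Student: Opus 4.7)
Following the strategy announced in the abstract, I would reduce the parabolic problem to a coercive family of frequency-domain problems, apply transformation optics at each frequency, and synthesize back by inverse Fourier transform. Extending $u,u_c$ by zero for $t<0$ and taking the Fourier transform in $t$, the initial datum contributes a $\delta_0(t)$-source, so $\hat u(\omega,\cdot):=\int_0^\infty e^{-i\omega t}u(t,\cdot)\,dt$ and $\hat u_c$ solve
\begin{equation*}
i\omega\hat u-\Delta\hat u=\hat f+u_0,\qquad i\omega\rho_c\hat u_c-\dive(A_c\nabla\hat u_c)=\hat f+\rho_c u_0,
\end{equation*}
in $\Omega$ with zero Dirichlet data on $\partial\Omega$. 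For each real $\omega$ the operator $-\dive(A_c\nabla\cdot)+i\omega\rho_c$ is coercive on $H^1_0(\Omega)$ (test against the complex conjugate and split into real/imaginary parts), so each problem is uniquely solvable with a priori bound by the $L^2$ norm of the right-hand side, and the $L^2$ bound on the solution itself improves like $|\omega|^{-1}$ for large $|\omega|$.

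For each fixed $\omega$ I would then execute the transformation-optics reduction with $F_\eps$ from \eqref{def-Frho}. Set $\hat v_c(x):=\hat u_c(F_\eps(x))$; since $F_\eps=\mathrm{id}$ outside $B_2$, one has $\hat v_c=\hat u_c$ on $\Omega\setminus B_2$, so it suffices to control $\hat v_c-\hat u$ there. The definition \eqref{TO--S} of the cloak as the pushforward of $(I,1)$ by $F_\eps$, combined with the change-of-variables identity \eqref{pushforward}, guarantees that $\hat v_c$ satisfies the \emph{homogeneous} equation $i\omega\hat v_c-\Delta\hat v_c=\hat f+u_0\circ F_\eps$ on $\Omega\setminus B_\eps$, while on the small ball $B_\eps$ the pulled-back coefficients become the rescaled $(a_O(\cdot/\eps),\eps^{-2}\rho_O(\cdot/\eps))$. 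Hence $\hat w:=\hat v_c-\hat u$ obeys a transmission problem on $\Omega$ whose jumps are confined to $\partial B_\eps$, with zero Dirichlet data on $\partial\Omega$.

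The heart of the argument is the frequency-domain cloaking bound
\begin{equation*}
\|\hat u_c(\omega,\cdot)-\hat u(\omega,\cdot)\|_{H^1(\Omega\setminus B_2)}\le C\,e(\eps,d)\,\Phi(\omega)\,\bigl(\|\hat f(\omega,\cdot)\|_{L^2(\Omega)}+\|u_0\|_{L^2(\Omega)}\bigr),
\end{equation*}
with a weight $\Phi\in L^1(\mR)$. The factor $e(\eps,d)$ is the familiar smallness induced by an inclusion of radius $\eps$ against an elliptic background, as in \cite{Kohn,Ng-Vogelius-1}: matching transmission data across $\partial B_\eps$ by layer potentials or by energy methods produces an exterior field whose norm scales like the Green's function of $-\Delta$, giving $\eps$ in $d=3$ and $|\ln\eps|^{-1}$ in $d=2$. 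The weight $\Phi$ would combine a bounded quasistatic estimate for $|\omega|\lesssim 1$ with polynomial decay in $|\omega|$ at high frequency coming from the extra coercivity of $-\Delta+i\omega$. Inverting the transform,
\begin{equation*}
u_c(t,\cdot)-u(t,\cdot)=\frac{1}{2\pi}\int_\mR e^{i\omega t}\bigl(\hat u_c(\omega,\cdot)-\hat u(\omega,\cdot)\bigr)d\omega,
\end{equation*}
and using $\|\hat f(\omega,\cdot)\|_{L^2}\le\|f\|_{L^1_tL^2}$ uniformly in $\omega$, then yields the theorem with $C$ proportional to $\|\Phi\|_{L^1}$ and no dependence on $t$.

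The main obstacle I anticipate is making the frequency estimate simultaneously \emph{uniform} in $\omega\in\mR$ and \emph{integrable}: the $e(\eps,d)$ scaling is classical for the static Laplacian, but transferring it to the complex-shifted operator $-\Delta+i\omega$ requires careful tracking at large $|\omega|$, where boundary-layer behavior near $\partial B_\eps$ could in principle degrade the cloaking factor. A secondary subtlety is that $u_0$ is not assumed to vanish inside $B_2$, so the source $\rho_c u_0$ does not pull back cleanly under $F_\eps$; this probably requires splitting $u_0$ into exterior and interior pieces and handling the interior piece by a separate interior-source cloaking estimate.
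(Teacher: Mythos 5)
Your overall strategy --- Fourier transform in $t$, a frequency-domain cloaking estimate, then inversion --- is precisely the paper's. The genuine gap is that you have named but not produced the estimate that does all the work, and the form you postulate for it is structurally wrong in two dimensions.

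The paper's Proposition~2.1 establishes, for the frequency-$\omega$ problem with source $g$ supported outside $B_2$,
\begin{equation*}
\| v_\eps - v\|_{H^1(\Omega\setminus B_r)} \le C\, e(\eps,\omega,d)\,(1+\omega^{-1/2})\,\|g\|_{L^2(\Omega)},
\end{equation*}
with $e(\eps,\omega,3)=\eps e^{-\sqrt{\omega}/4}$ and, crucially, $e(\eps,\omega,2)=\ln\omega/\ln(\omega\eps)$ for $0<\omega<1/2$. The latter is \emph{not} of the product form $e(\eps,2)\Phi(\omega)$ that you propose: if $\omega$ is exponentially small in $1/\eps$, then $\ln\omega/\ln(\omega\eps)$ is of order one, so there is no uniform $|\ln\eps|^{-1}$ gain at each fixed frequency. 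The final $|\ln\eps|^{-1}$ only emerges after integration in $\omega$, using the bound $|\ln\omega/\ln(\omega\eps)|\le |\ln\omega|/|\ln\eps|$ together with the integrability of $\omega^{-1/2}|\ln\omega|$ near $0$. Your factorized ansatz cannot see this, and the ``bounded quasistatic estimate for $|\omega|\lesssim 1$'' you invoke is in fact not bounded --- it blows up logarithmically at $\omega=0$ in $d=2$.

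Second, you appeal to ``polynomial decay from coercivity'' at high frequency, but coercivity alone gives only $\|v_\eps\|_{L^2}\lesssim\omega^{-1/2}\|g\|_{L^2}$, which is not integrable in $\omega$. The decay the paper actually obtains is \emph{exponential}, $e^{-\sqrt{\omega}/4}$, and it does not come from energy estimates: it comes from the representation of the rescaled exterior field at distance $\sim 1/\eps$ via the fundamental solution of $\Delta + i\omega\eps^2$, namely $e^{ik|x-y|}/(4\pi|x-y|)$ in $d=3$ and $\tfrac{i}{4}H_0^{(1)}(k|x-y|)$ in $d=2$ with $k=e^{i\pi/4}\eps\omega^{1/2}$, evaluated at $|x-y|\sim r/\eps$ so that $|e^{ik|x-y|}|\sim e^{-c\sqrt{\omega}}$ independently of $\eps$. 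This is exactly where the dissipative nature of the heat operator is exploited, and it is the main technical content of the paper (Lemma~2.3, Lem-Cor); invoking the static-Laplace smallness of \cite{Kohn,Ng-Vogelius-1} and hoping it transfers does not supply it.

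Your worry about $u_0$ is legitimate but left unresolved in your sketch. After pullback the initial datum becomes $u_0\circ F_\eps^{-1}$, so the effective frequency-domain source $\hat f + u_0\circ F_\eps^{-1}$ is supported outside $B_2$ only when $u_0$ is; Proposition~2.1 needs that support condition. To match the theorem literally you would either restrict to $\supp u_0\subset\Omega\setminus B_2$ or supply the separate interior-datum argument you allude to, and the paper itself applies the proposition with $g=\hat f + u_0$ without spelling this out. You flagged the right subtlety, but flagging it is not the same as closing it.
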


As a consequence of  Theorem~\ref{thm1}, $\lim_{\eps \to 0} u_c = u$ in $(0,T) \times (\Omega \setminus B_2)$ for all $f$ with compact support outside $(0, T) \times B_2$.  One therefore cannot detect the difference between $(A_c, \rho_c)$ and $(I, 1)$ as $\eps \rightarrow 0$ by observation of $u_c$ outside $B_2$: cloaking is achieved for observers outside $B_2$ in the limit as $\eps \rightarrow 0$.

We  now briefly describe  the idea of the proof. The starting point of the analysis is the invariance of the heat equations under a change of variables which we now state.  
\begin{lemma}\label{lem-TO-H}
 Let $d \ge 2$,  $T>0$, $\Omega$ be a bounded open subset of $\mR^d$ of class $C^1$, and let $A$ be an elliptic  symmetric matrix-valued function,  and $\rho$ be a bounded,  measurable   function defined on $\Omega$ bounded above and below by positive constants. Let $F: \Omega \mapsto \Omega$ be bijective such that $F$ and $F^{-1}$ are Lipschitz,  $\det \nabla F > c$ for a.e. $x \in \Omega$ for some $c>0$, and $F(x) = x$ near $\partial \Omega$. Let $f \in L^1\big( (0, T); L^2(\Omega) \big)$ and $u_0 \in L^2(\Omega)$.  Then $u \in L^2\big( (0, T); H^1_0(\Omega) \big) \cap C\big( [0,T), L^2(\Omega) \big)$ is the weak solution of
\begin{equation}\label{S1}
\left\{\begin{array}{cl}
\partial_{t} (\rho u)  - \dive (A \nabla u)  = f  & \mbox{ in } \Omega_T, \\[6pt]
 u = 0 & \mbox{ on }   (0, T) \times \partial \Omega, \\[6pt]
 u(0, \cdot) = u_0 & \mbox{ in } \Omega, 
\end{array}\right. 
\end{equation}
if and only if $v(t, \cdot): = u (t, \cdot) \circ F^{-1} \in  L^2\big((0, T); H^1_0(\Omega) \big) \cap C\big( [0,T), L^2(\Omega) \big)$ is the weak solution of
\begin{equation}\label{S2}
\left\{\begin{array}{cl}
\partial_{t} (F_*\rho v)  - \dive (F_*A \, \nabla v) = F_*f &  \mbox{ in } \Omega_T, \\[6pt]
 u = 0 & \mbox{ on }   (0, T) \times \partial \Omega, \\[6pt]
 v(0, \cdot) = u_0 \circ F^{-1} & \mbox{ in } \Omega.
\end{array}\right. 
\end{equation}
\end{lemma}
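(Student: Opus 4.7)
The natural strategy is to reduce the equivalence to the invariance of the weak formulation under the change of variables $y = F(x)$. First I would recall that a function $u \in L^2\bigl((0,T); H^1_0(\Omega)\bigr) \cap C\bigl([0,T),L^2(\Omega)\bigr)$ solves \eqref{S1} in the weak sense iff, for every $\varphi \in H^1_0(\Omega)$,
\begin{equation*}
\frac{d}{dt}\int_\Omega \rho\, u\, \varphi\, dx + \int_\Omega A \nabla u \cdot \nabla \varphi\, dx = \int_\Omega f\, \varphi\, dx \quad \text{in } \mathcal{D}'(0,T),
\end{equation*}
with $u(0,\cdot) = u_0$; and analogously for \eqref{S2}. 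Since $F$ is bi-Lipschitz with $F=\mathrm{id}$ near $\partial\Omega$, the map $\varphi \mapsto \psi := \varphi \circ F^{-1}$ is a bijection $H^1_0(\Omega) \to H^1_0(\Omega)$, so it suffices to test \eqref{S1} with $\varphi(x) = \psi(F(x))$ and rewrite every term as an integral over $y = F(x)$.

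The core computation uses the chain rule (valid a.e.\ because $F$, $F^{-1}$ are Lipschitz): with $v(t,y) = u(t, F^{-1}(y))$ and $\varphi(x) = \psi(F(x))$, one has $\nabla_x u(t,x) = \nabla F^T(x)\, \nabla_y v(t,F(x))$ and $\nabla_x \varphi(x) = \nabla F^T(x)\, \nabla_y \psi(F(x))$, hence
\begin{equation*}
A(x)\nabla_x u \cdot \nabla_x \varphi = \bigl[\nabla F(x) A(x) \nabla F^T(x)\bigr] \nabla_y v \cdot \nabla_y \psi.
\end{equation*}
Applying the change-of-variables formula with $dx = |\det \nabla F(x)|^{-1}\, dy$ and matching against the definitions \eqref{pushforward}, the three integral terms transform exactly as
\begin{equation*}
\int_\Omega \rho\, u\, \varphi\, dx = \int_\Omega F_*\rho\, v\, \psi\, dy,\qquad \int_\Omega A\nabla u \cdot \nabla \varphi\, dx = \int_\Omega F_*A\, \nabla v \cdot \nabla \psi\, dy,
\end{equation*}
and $\int_\Omega f\varphi\, dx = \int_\Omega F_*f\, \psi\, dy$. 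Since the time derivative acts only on the integral in $x$ (or $y$) and passes through the change of variables (which is $t$-independent), the weak formulation for $u$ is converted term by term into that for $v$.

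Finally I would check the functional-analytic setting and the initial/boundary data: the lower bound $\det \nabla F > c$ together with the Lipschitz regularity of $F, F^{-1}$ guarantees that $u \mapsto u\circ F^{-1}$ is an isomorphism of the spaces $L^2((0,T); H^1_0(\Omega))$ and $C([0,T), L^2(\Omega))$, and that $F_*A$, $F_*\rho$ inherit the ellipticity/boundedness needed to make \eqref{S2} well posed. The trace condition $v=0$ on $\partial\Omega$ is automatic since $F=\mathrm{id}$ near $\partial\Omega$, and $v(0,\cdot) = u_0 \circ F^{-1}$ by definition. The only real bookkeeping obstacle is making the time derivative identity rigorous at the level of distributions, which is handled by pairing with test functions $\psi(y)\eta(t)$ with $\eta \in C^\infty_c(0,T)$ and unwinding the $\frac{d}{dt}$ in the distributional sense; since the change of variables is stationary, this is routine. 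Reversing the roles of $F$ and $F^{-1}$ (which has the same regularity) gives the converse implication, completing the equivalence.
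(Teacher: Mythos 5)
Your proof is correct and follows the standard change-of-variables argument in the weak formulation, which is precisely what the paper invokes when it states that the proof is ``similar to that of the Helmholtz equation'' (citing Kohn--Onofrei--Vogelius--Weinstein and the parabolic version in Craster et al.) without supplying details. Your write-up simply fills in the bookkeeping the paper defers to the reader: the bijection of test spaces under composition with $F$, the identity $A\nabla_x u\cdot\nabla_x\varphi = (\nabla F\,A\,\nabla F^T)\nabla_y v\cdot\nabla_y\psi$, the Jacobian factor matching the definition of $F_*$, and the observation that the $t$-independent change of variables commutes with $\partial_t$.
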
 

Recall that  $F_*$ is defined in \eqref{pushforward}.  In this paper, we use the following standard definition of weak solutions: 

\begin{definition} \label{def1} Let $d \ge 2$. We say a function 
\begin{equation*}
u \in   L^2\big((0, T); H^1_0(\Omega) \big) \cap C \big( [0,T), L^2(\Omega) \big)
\end{equation*}
is a weak solution to \eqref{S1} if $u(0,\cdot)= u_0 \mbox{ in } \Omega$ and $u$ satisfies
\begin{align}\label{def-weak-sol}
\frac{d}{dt} \int_{\Omega} \rho u(t, \cdot) \varphi  + \int_{\Omega} A \nabla u(t, \cdot) \nabla \varphi = \int_{\Omega} f(t, \cdot) \varphi \mbox{ in } (0, T), 
\end{align}
in the distributional sense for all $\varphi \in H^1_0(\Omega)$. 
\end{definition}

The existence and uniqueness of weak solutions are standard, see, e.g., \cite{Allaire} (in fact, in \cite{Allaire}, $f$ is assumed in $L^2\big((0, T); L^2(\Omega) \big)$, however, the conclusion holds also for $f \in L^1\big((0, T); L^2(\Omega) \big) $ with a similar proof, see, e.g., \cite{Ng-Valentin}).  The proof of Lemma~\ref{lem-TO-H} is similar to that of the Helmholtz equation, see, e.g., \cite{Kohn1} (see also \cite{CG} for a parabolic version).

We now return to the idea of the proof of Theorem~\ref{thm1}. Set 
$$
u_\eps (t, \cdot) = u_c (t, \cdot) \circ F_\eps^{-1} \mbox{ for } t \in (0, +\infty). 
$$ 
Then $u_\eps$ is the unique solution of the system 
\begin{equation}\label{eq-uc-1-*}
\left\{\begin{array}{cl}
\partial_t (\rho_\eps  u_\eps ) - \dive (A_\eps \nabla u_\eps) = f  & \mbox{ in }  \Omega_\infty, \\[6pt]
u_\eps = 0 & \mbox{ on } (0, +\infty) \times \partial \Omega, \\[6pt]
u_\eps (t = 0, \cdot ) = u_0 & \mbox{ in } \Omega, 
\end{array}\right. 
\end{equation}
where
\begin{equation}\label{med-CH}
(A_\eps, \rho_\eps) = \left\{\begin{array}{cl} (I, 1) & \mbox{ in } \mR^d \setminus B_\eps, \\[6pt]
\Big( \eps^{2-d} a(\cdot/ \eps), \eps^{-d} \rho(\cdot / \eps) \Big) & \mbox{ in } B_\eps. 
\end{array} \right. 
\end{equation}
Moreover, 
$$
u_c - u = u_\eps - u \mbox{ in } (0, + \infty) \times B_2^c.  
$$

In comparing the coefficients of the systems verified by $u$ and $u_\eps$, the analysis can be derived from the study of the effect of a small inclusion $B_\eps$. The case in which  finite isotropic materials contain  inside the small inclusion  was investigated in \cite{AIKK} (see also \cite{Ams} for a related context). The analysis  in \cite{AIKK} partly involved the polarization tensor information and took the advantage of the fact that the coefficients inside the small inclusion are finite. In the cloaking context,   Craster et al.  \cite{CG} derived an estimate of the order $\eps^d$ for a time larger than a threshold one. Their analysis is based on 
long time behavior of solutions to parabolic equations and estimates for the degree of visibility of the conducting problem, see \cite{Kohn, Ng-Vogelius-1}, hence the threshold time goes to infinity as $\eps \to 0$.  

In this paper, to overcome the blow up of the coefficients inside the small inclusion and to achieve the cloaking effect at any positive time, 
we follow the approach of  Nguyen and Vogelius in \cite{Ng-Vogelius-3}.  The idea is to derive appropriate estimates for the effect of small inclusions in the time domain from the ones in the frequency domain using the Fourier transform with respect to time.   Due to the dissipative  nature of the heat equation, the problem in the frequency for the heat equation is more stable than the one corresponding to the acoustic waves, see,  e.g., \cite{Ng-Vogelius-2, Ng-Vogelius-3},   and the analysis is somehow easier to handle in the high frequency regime. After using a standard blow-up argument, a technical point in the analysis is to obtain an estimate for the solutions of  the equation $\Delta v + i \omega \eps^2 v = 0$ in $\mR^d \setminus B_1$ ($\omega > 0$) at the distance $1/\eps$ in which the dependence on $\eps$ and $\omega$ are explicit (see Lemma~\ref{Lem-Cor}). Due to the blow up of the fundamental solution in two dimensions, the analysis requires some new ideas.  We emphasize that even though our setting is in a bounded domain with zero Dirichlet  boundary condition, we employs Fourier transform in time instead of eigenmodes decomposition as in \cite{CG}.  This has the advantage that one can put both systems of $u_\eps$ and $u$ in the same context.

\section{Proof of the main result}

To implement the analysis in the frequency domain, let us introduce the Fourier transform with respect to time $t$: 
\begin{equation}\label{def-F}
\hat \varphi (k, x) = \frac{1}{\sqrt{2 \pi}} \int_{\mR} \varphi(t, x) e^{i k t} \, dt \mbox{ for } k \in \mR,  
\end{equation}
for $\varphi \in L^2((-\infty, + \infty), L^2(\mR^d))$.
Extending $u,\, u_c$, $u_\rho$,  and $f$ by $0$ for $t < 0$, and considering the Fourier with respect to time at the frequency $\omega > 0$, we obtain
\begin{equation*}
 \Delta \hat u  + i \omega \hat u= - ( \hat f  + u_0) \mbox{ in } \Omega, 
\end{equation*}
and 
\begin{equation*}
 \dive (A_\eps \nabla  \hat u_\eps) + i \omega \rho_\eps \hat u_\eps = - (\hat f + u_0) \mbox{ in } \Omega, 
\end{equation*}
where 
\begin{equation*}
(A_\eps, \rho_\eps)= \left\{ \begin{array}{cl} (I, 1)& \mbox{ in } \Omega \setminus B_\eps, \\[6pt]
\big(\eps^{2-d} a( \cdot/ \eps) , \eps^{-d}\sigma( \cdot /\eps) \big) & \mbox{ in } B_\eps. 
\end{array}\right. 
\end{equation*}

The main ingredient in the proof of Theorem~\ref{thm1} is the following: 

\begin{proposition}\label{main}  Let $\omega > 0$, $0 < \eps < 1/2$, and let $g \in L^2(\Omega)$ with $\supp g \subset \Omega \setminus B_2$. Assume that  $v, \, v_\eps \in H^1(\Omega)$ are respectively the unique solution of the systems 
\begin{equation*}
\left\{\begin{array}{cl}
 \Delta v  + i \omega v =  g &  \mbox{ in } \Omega, \\[6pt]
 v= 0 & \mbox{ on } \partial \Omega,  
\end{array}\right. 
\end{equation*}
and 
\begin{equation*}
\left\{\begin{array}{cl}
 \dive (A_\eps \nabla  v_\eps) + i \omega \rho_\eps v_\eps = g  & \mbox{ in } \Omega, \\[6pt]
 v_\eps = 0 & \mbox{ on } \partial \Omega. 
\end{array}\right. 
\end{equation*}
We have, for $4 \eps < r < 2$, 
\begin{equation}
\| v_\eps - v\|_{H^1(\Omega \setminus B_r)} \le C e(\eps, \omega, d) (1+\omega^{-1/2}) \| g \|_{L^2(\Omega)}, 
\end{equation}
for some positive constant $C = C_r$ independent of $\eps$, $\omega$, and $g$. 
Here 
\begin{equation}\label{er-3d}
e(\eps, \omega, 3) = \eps e^{-\sqrt{\omega}/4}, 
\end{equation}
and 
\begin{equation}\label{er-2d}
e(\eps, \omega, 2) = \left\{\begin{array}{cl} e^{-\sqrt{\omega}/4}/|\ln  \eps |& \mbox{ if } \omega \ge 1/2, \\[6pt]
\ln \omega /  \ln (\omega \eps) & \mbox{ if } 0 < \omega < 1/2. 
\end{array}\right. 
\end{equation}
\end{proposition}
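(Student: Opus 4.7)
My approach would combine three ingredients: background energy estimates, a blow-up analysis of the inclusion near $B_\eps$, and exterior propagation driven by the fundamental solution of $-\Delta - i\omega$. Testing the equation for $v$ against $\bar v$ and splitting real and imaginary parts yields
\[
\omega\|v\|_{L^2(\Omega)}^2\le \|g\|_{L^2}\|v\|_{L^2(\Omega)}, \qquad \|\nabla v\|_{L^2(\Omega)}^2\le \|g\|_{L^2}\|v\|_{L^2(\Omega)},
\]
so that $\|v\|_{H^1(\Omega)}\lesssim(1+\omega^{-1/2})\omega^{-1/2}\|g\|_{L^2}$, with a matching estimate for $v_\eps$ in the natural weighted energy $\int_\Omega A_\eps|\nabla v_\eps|^2 + \omega\int_\Omega \rho_\eps|v_\eps|^2$. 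Since $\operatorname{supp} g\subset\Omega\setminus B_2$, interior elliptic estimates applied to $\Delta v+i\omega v=0$ on $B_{7/4}$ give $\|v\|_{C^1(\overline B_{3/2})}\lesssim(1+\omega^{-1/2})\|g\|_{L^2}$; this is the ``background field'' that the inclusion perturbs.

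Second, I would pass to the rescaled solution $V_\eps(y):=v_\eps(\eps y)$. In $B_1$, $V_\eps$ solves $\dive(a(y)\nabla V_\eps)+i\omega\sigma(y)V_\eps=0$, while in $B_{1/\eps}\setminus\overline B_1$ it solves the exterior dissipative equation $\Delta V_\eps+i\omega\eps^2 V_\eps=0$, with transmission at $|y|=1$ and far-field data inherited (through the scaling $y\mapsto\eps y$) from $v$ on $\partial B_2$. Because $v$ is smooth in $B_2$ with $v(x)=v(0)+O(|x|)$, the data seen by the inclusion is essentially the constant $v(0)$, and the natural quasistatic limit problem is $\dive(a\nabla\widehat V)=0$ in $B_1$, $\Delta\widehat V=0$ in $\R^d\setminus\overline B_1$, $\widehat V\to v(0)$ at infinity. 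The error $V_\eps-\widehat V$ at $|y|\sim 1$ is then controlled by explicit estimates for $\Delta V+i\omega\eps^2 V=0$ on $\R^d\setminus B_1$ with prescribed data at $|y|\sim 1/\eps$---exactly the estimate flagged in the introduction (\emph{cf.}\ Lemma~\ref{Lem-Cor}). In $d=3$ the exterior harmonic corrector decays like $|y|^{-1}$, giving a correction of size $\eps$ back in original coordinates; in $d=2$ the logarithmic growth of the 2D harmonic exterior forces a correction of size $|\ln\eps|^{-1}$.

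Third, $w:=v_\eps-v$ satisfies $\Delta w+i\omega w=0$ in $\Omega\setminus\overline B_\eps$ with $w=0$ on $\partial\Omega$, and its trace on $\partial B_{3\eps}$ is controlled by Step~2. I would propagate this data outward using the fundamental solution of $-\Delta-i\omega$ on $\R^d$, which decays like $|x|^{-(d-1)/2}e^{-\sqrt{\omega/2}|x|}$ in 3D and like the Hankel kernel $H^{(1)}_0((1+i)\sqrt{\omega/2}|x|)$ in 2D, in both cases exponentially once $|x|\sqrt\omega\gg 1$. Combined with a cutoff/Green's-identity argument to pass from $\R^d$ to the bounded domain $\Omega$ with Dirichlet boundary condition, this gives the factor $e^{-\sqrt\omega/4}$ for $\omega\ge 1/2$. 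Multiplying by the near-field size from Step~2 yields $e(\eps,\omega,3)=\eps\,e^{-\sqrt\omega/4}$ and, for $\omega\ge 1/2$, $e(\eps,\omega,2)=e^{-\sqrt\omega/4}/|\ln\eps|$.

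The hard case, and the one I would expect to consume most of the work, is 2D at low frequency, $0<\omega<1/2$. Here neither the near-field corrector nor the kernel $\Phi_\omega$ exhibits clean separated decay: the rescaled inclusion corrector is logarithmic at $|y|\sim 1/\eps$ and the 2D Hankel kernel is logarithmic at $|x|\sqrt\omega\lesssim 1$, so the two scales overlap throughout the relevant region. The announced factor $\log\omega/\log(\omega\eps)$ should emerge from a careful matched-asymptotic comparison of the two logarithmic expansions---essentially, the ratio of the logarithmic denominator produced by the exterior Helmholtz kernel evaluated at $|x|\sim 1$ to the logarithmic denominator produced by matching the inclusion corrector out to $|y|\sim 1/\eps$. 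Keeping the constant in the final estimate independent of both $\eps$ and $\omega$ (and only $r$-dependent), while handling the transition region $\omega\eps\approx 1$ without the denominators collapsing, is the delicate bookkeeping step where I would use Lemma~\ref{Lem-Cor} at full strength.
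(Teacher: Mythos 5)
Your Step 1 matches the paper, and Step 3 (propagating $w=v_\eps-v$ outward from the small inclusion and then correcting to the bounded domain by a cutoff/Green's identity argument) is essentially the paper's structure: one sets $\tw_\eps$ to be the exterior whole-space solution of $\Delta\tw_\eps+i\omega\tw_\eps=0$ with the same data as $w_\eps$ on $\partial B_\eps$, applies Lemma~\ref{Lem-Cor} after rescaling, and removes the error $\chi_\eps=w_\eps-\varphi\tw_\eps$ by an energy estimate. But Step 2 is where your plan genuinely diverges from, and is weaker than, the paper, for two reasons. First, it is unnecessary: the paper never constructs any near-field corrector $\widehat V$. Because $A_\eps=\eps^{2-d}a(\cdot/\eps)$ and $\rho_\eps=\eps^{-d}\sigma(\cdot/\eps)$, the dissipative energy estimate $\int_\Omega\langle A_\eps\nabla v_\eps,\nabla v_\eps\rangle+\omega\int_\Omega\rho_\eps|v_\eps|^2\le C\|g\|^2$ is, after the change of variables $x=\eps y$, \emph{exactly} the $H^1(B_1)$ norm of $v_\eps(\eps\,\cdot)$ up to an $\omega$-factor; this gives $\|v_\eps(\eps\,\cdot)\|_{H^{1/2}(\partial B_1)}\le C(1+\omega^{-1/2})\|g\|$ directly, with no knowledge of the field inside $B_\eps$ and with a constant independent of $(a,\sigma)$. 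This observation is the crux of the proof and is what makes the estimate uniform in the cloaked material; your proposal misses it. Second, the role you assign to Lemma~\ref{Lem-Cor} in Step 2 runs in the wrong direction: the lemma takes data on $\partial B_1$ and shows the exterior solution is small at $|y|\sim 1/\eps$ (outward decay), whereas you want to control $V_\eps-\widehat V$ at $|y|\sim 1$ from ``prescribed data at $|y|\sim 1/\eps$'', which the lemma does not (and cannot, stably) provide. Moreover, justifying that the quasistatic profile $\widehat V$ with far-field $v(0)$ is indeed close to $V_\eps$ presupposes that $v_\eps\approx v$ near $B_\eps$, which is what Proposition~\ref{main} is supposed to establish — the argument is circular as stated.

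There is also a gap in the treatment of the $d=2$, $\omega<1/2$ regime. You correctly flag it as the delicate case, but ``matched-asymptotic comparison of the two logarithmic expansions'' is not an argument. The paper's Lemma~\ref{Lem-Cor} handles it explicitly: writing $k=e^{i\pi/4}\eps\omega^{1/2}$ with $|k|<1/2$, one decomposes the single-layer piece of the representation formula as $G_k(x/\eps,y)=[G_k(x/\eps,y)-G_k(x/\eps,0)]+G_k(x/\eps,0)$, uses $|\nabla_y G_k|\le C|k|$ on the near shell to show $\bigl|\int_{\partial B_3}\partial_r v\bigr|\le C|\ln|k||^{-1}\|v\|_{H^{1/2}(\partial D)}$, and then feeds this back in together with the small-argument expansion $H_0^{(1)}(z)=\tfrac{2i}{\pi}\ln\tfrac{|z|}{2}+O(1)$ to produce exactly the factor $\ln\omega/\ln(\omega\eps)$. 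That cancellation — between the $\ln\omega$ coming from $G_k(x/\eps,0)$ and the $|\ln|k||^{-1}=|\ln(\eps\omega^{1/2})|^{-1}$ coming from the mean of the normal derivative — is the content, and it is not something a heuristic matching will produce. So the overall plan has the right propagation lemma in view but both misuses it (direction) and leaves the one genuinely hard computation (the 2D low-frequency cancellation) unaddressed.
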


The rest of this section is divided into three subsections. In the first subsection, we present several lemmas used in the proof of Proposition~\ref{main}. The proofs of Proposition~\ref{main}  and Theorem~\ref{thm1} are then given in the second and the third subsections,  respectively. 

\subsection{Preliminaries} In this subsection, we state and prove several useful lemmas used in the proof of Proposition~\ref{main}. Throughout, 
$D \subset B_1$ denotes a smooth, bounded, open subset of $\mR^d$ such that $\mR^d \setminus D$ is connected, and $\nu$ denotes the unit normal vector field on $\partial D$, directed into $\mR^d \setminus D$. The jump across $\partial D$ is denoted by $[ \cdot ]$.

\medskip 
The first result  is the following simple one:

\begin{lemma}\label{Lem1} Let  $d=2, \,  3$, $k > 0$,   and let $v \in H^1 (\R^d \setminus D)$ be such that   $\Delta v +ik v=0$ in $\mR^d \setminus D$. We have, for $R>2$,   
\begin{equation}\label{Lem1-a1}
\|v\|_{H^1(B_R\setminus D)} \le C_R (1+k) \|v\|_{H^{1/2}(\partial D)}, 
\end{equation}
%and 
%\begin{equation}\label{Lem1-a2}
%\|v\|_{H^{-1/2}(\partial D)} \le C (1+k) \|v\|_{H^{1/2}(\partial D)}, 
%\end{equation}
for some positive constants $C_R$ independent of $k$ and $v$. 
\end{lemma}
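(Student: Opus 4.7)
The plan is to reduce the estimate to a zero Dirichlet boundary-value problem on $\mR^d\setminus D$ and then close an energy identity that tracks the $k$-dependence explicitly.

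Step 1: lift the boundary data. Using the regularity of $\partial D$, construct a compactly supported extension $w\in H^1(\mR^d\setminus D)$ with $w=v$ on $\partial D$, $w$ supported in $\overline{B_3}\setminus D$, and the $k$-free trace bound
$$\|w\|_{H^1(\mR^d\setminus D)} \le C\,\|v\|_{H^{1/2}(\partial D)}.$$
This is a standard trace-inverse/cutoff construction. Set $u=v-w$, so $u\in H^1(\mR^d\setminus D)$ has zero trace on $\partial D$ and weakly solves $\Delta u + iku = -\Delta w - ikw$.

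Step 2: energy identity on the exterior domain. Test the weak formulation against functions of the form $u\chi_R$, where $\chi_R$ is a smooth cutoff equal to $1$ on $B_R$ and supported in $B_{2R}$. Since $u\in H^1(\mR^d\setminus D)$ globally, $u\chi_R\to u$ in $H^1$ as $R\to\infty$ (the gradient-of-cutoff term vanishes because $|\nabla\chi_R|\lesssim 1/R$ and $u\in L^2$). Passing to the limit yields
$$-\int_{\mR^d\setminus D}|\nabla u|^2 + ik\int_{\mR^d\setminus D}|u|^2 = \int_{\mR^d\setminus D}\bigl(\nabla w\cdot\nabla\bar u - ikw\bar u\bigr).$$
Splitting real and imaginary parts, then applying Cauchy--Schwarz and Young's inequality with a small parameter (to absorb $\tfrac{1}{2}\|\nabla u\|_{L^2}^2 + \tfrac{k}{2}\|u\|_{L^2}^2$ into the left-hand side), gives
$$\|\nabla u\|_{L^2(\mR^d\setminus D)}^2 + k\|u\|_{L^2(\mR^d\setminus D)}^2 \le C\bigl(\|\nabla w\|_{L^2}^2 + k\|w\|_{L^2}^2\bigr) \le C(1+k)\|v\|_{H^{1/2}(\partial D)}^2.$$

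Step 3: localize and conclude. Since $u=0$ on $\partial D$ and $B_R\setminus\overline D$ is bounded with $\partial D$ lying on its boundary, Poincaré's inequality gives $\|u\|_{L^2(B_R\setminus D)}\le C_R\|\nabla u\|_{L^2(B_R\setminus D)}$. Combined with Step 2 this yields $\|u\|_{H^1(B_R\setminus D)}\le C_R(1+k)^{1/2}\|v\|_{H^{1/2}(\partial D)}$. Writing $v=u+w$ and using $\|w\|_{H^1}\le C\|v\|_{H^{1/2}(\partial D)}$, the triangle inequality gives the claimed bound (in fact with the sharper factor $(1+k)^{1/2}\le 1+k$).

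The main obstacle is justifying the energy identity on the unbounded exterior domain: one must show that $u$ is an admissible test function, which is handled by the cutoff-density argument in Step 2. Everything else is a standard coercivity/Young's inequality computation, with the key point being that the left-hand side produces the weighted sum $\|\nabla u\|_{L^2}^2 + k\|u\|_{L^2}^2$, so the linear-in-$k$ right-hand side only costs a factor $(1+k)^{1/2}$ after taking a square root.
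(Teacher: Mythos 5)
Your proof is correct, and it takes a genuinely different route from the paper's. The paper tests the equation against $v$ directly and is left with the boundary term $\int_{\partial D}\partial_\nu v\,\bar v$; it controls $\|\partial_\nu v\|_{H^{-1/2}(\partial D)}$ via trace theory and the equation, arriving at the weighted estimate $\|\nabla v\|_{L^2(\mR^d\setminus D)}^2 + k\|v\|_{L^2(\mR^d\setminus D)}^2 \le C(1+k)\|v\|_{H^{1/2}(\partial D)}^2$. For $k\ge 1$ this already gives the $L^2$ bound, but for $0<k<1$ the $L^2$ control degenerates, and the paper has to supplement the energy estimate with a Hardy inequality in $d=3$ and a compactness/contradiction argument in a weighted space $W^1(\mR^2\setminus D)$ in $d=2$. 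You instead lift the Dirichlet data by a compactly supported $H^1$ extension $w$ with a $k$-free bound, pass to $u=v-w$ which has zero trace on $\partial D$, and then obtain the $L^2$ estimate for $u$ on $B_R\setminus D$ directly from the Poincar\'e--Friedrichs inequality with vanishing trace on the portion $\partial D$ of the boundary. This makes the small-$k$ regime and both dimensions go through uniformly, with no Hardy inequality or contradiction argument needed; the price is only the (standard) construction of the bounded extension operator $H^{1/2}(\partial D)\to H^1$. Both routes ultimately give the sharper $(1+k)^{1/2}$ factor, as you note. One small remark: you should make sure the Poincar\'e inequality you invoke is the version for functions vanishing on a positive-measure portion of the boundary (here $\partial D$), not the full Dirichlet version, since $u$ does not vanish on $\partial B_R$; this is exactly what you stated, so the argument is sound.
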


\begin{proof} Multiplying the equation by $\bar v$ (the conjugate of $v$) and integrating by parts, we have 
$$
\int_{\mR^d \setminus D} |\nabla v|^2 - i k \int_{\mR^d \setminus D} |v|^2 = \int_{\partial D} \partial_\nu v \bar v. 
$$
This implies 
\begin{equation}\label{Lem1-p1}
\int_{\mR^d \setminus D} |\nabla v|^2 +  k \int_{\mR^d \setminus D} |v|^2 \le C  \| \partial_{\nu} v \|_{H^{-1/2}(\partial D)} \| v \|_{H^{1/2}(\partial D)}. 
\end{equation}
Here and in what follows,  $C$ denotes a positive constant independent of $v$ and $k$.  By the trace theory, we have
\begin{equation}\label{Lem1-p2}
\| \partial_{\nu} v \|_{H^{-1/2}(\partial D)} \le C \Big(\| \nabla v\|_{L^2(B_2 \setminus D)} + k \|v \|_{L^2(B_2 \setminus D)} \Big). 
\end{equation}
Combining \eqref{Lem1-p1} and \eqref{Lem1-p2} yields 
\begin{equation}\label{Lem1-p3}
\int_{\mR^d \setminus D} |\nabla v|^2 +  k \int_{\mR^d \setminus D} |v|^2 \le C (1+k)  \|  v \|_{H^{1/2}(\partial D)}^2 . 
\end{equation}
The conclusion follows when $k \ge 1$. 

Next, consider the case $0 < k < 1$. In the case where $d=3$, the conclusion  is a direct consequence of  \eqref{Lem1-p3} and the Hardy inequality (see, e.g.,  \cite[Lemma 2.5.7]{Nedelec}):
\begin{equation}\label{Hardy-3d}
\int_{\mR^3 \setminus D} \frac{|v|^2}{|x|^2} \le C \int_{\mR^3\setminus D } |\nabla v|^2.
\end{equation}

We next consider the case where $d=2$. 
It suffices to show 
\begin{equation}\label{Hardy} 
\int_{B_R \setminus D} |v|^2 \le C \| v\|^2_{H^{1/2}(\partial D)}.  
\end{equation}
By the Hardy inequality (see, e.g., \cite[Lemma 2.5.7]{Nedelec}), 
\begin{equation}\label{Hardy-2d}
\int_{\mR^2 \setminus D} \frac{|v|^2}{|x|^2 \ln (2 + |x|)^2} \le C \left( \int_{\mR^2\setminus D} |\nabla v|^2 + \int_{B_2 \setminus D} |v|^2 \right), 
\end{equation}
it suffices to  prove \eqref{Hardy} for $R = 2$. We proceed by contradiction. Suppose that there exists a sequence $(k_n) \to 0$ and a sequence $(v_n) \in H^1(\mR^2 \setminus D)$  such that 
\begin{equation*}
\Delta v_n + i k_n v_n = 0 \mbox{ in } \mR^2 \setminus D, \quad \| v_n\|_{L^2(B_2 \setminus D)} = 1, \quad \mbox{ and } \quad \lim_{n \to + \infty} \|v_n \|_{H^{1/2}(\partial D)} = 0. 
\end{equation*}
Denote 
\begin{equation*}
W^{1}(\mR^2 \setminus D) = \left\{ u \in L^1_{\loc}(\mR^2 \setminus D); \frac{u(x)}{\ln(2 + |x|) \sqrt{1 + |x|^2}}  \in L^2(\mR^2 \setminus D) \mbox{ and } \nabla u \in L^2(\mR^2 \setminus D) \right\}.
\end{equation*}
By  \eqref{Lem1-p3} and \eqref{Hardy-2d}, one might assume that $v_n$ converges to $v$ weakly in $H^1_{\loc}(\mR^2 \setminus D)$ and strongly in $L^2(B_2 \setminus D)$. Moreover, $v \in W^1(\mR^2 \setminus D)$, and satisfies 
\begin{equation}\label{Lem1-c1}
\Delta v = 0 \mbox{ in } \mR^2 \setminus D, \quad v = 0 \mbox{ on } \partial D, 
\end{equation}
and 
\begin{equation}\label{Lem1-c2}
 \| v \|_{L^2(B_2 \setminus D)} = 1. 
\end{equation}
From \eqref{Lem1-c1}, we have $v = 0$ in $\mR^2 \setminus D$ (see, e.g., \cite{Nedelec}) which contradicts \eqref{Lem1-c2}.  The proof is complete. 
\end{proof}

As a consequence of Lemma~\ref{Lem1}, we have 
\begin{lemma} \label{Lem-Cor} Let  $d=2, 3$, $\omega > 0$,  $0< \eps < 1/2$,  and let $v \in H^1 (\R^d \setminus D)$ be a solution of  $\Delta v +i\omega \eps^2 v=0$ in $\mR^d \setminus D$. We have, for $10 \eps  < r < R$ and $r < |x| < R$,   
\begin{equation}\label{Lem-Cor-p0}
| v(x/ \eps)|  \le C e(\eps, \omega, d) \|v\|_{H^{1/2}(\partial D)}, 
\end{equation}
for some positive constant $C= C_{r, R}$ independent of $\eps$,  $\omega$ and $v$. 
\end{lemma}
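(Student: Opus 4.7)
The plan is to use Green's representation formula for $v$ in the unbounded exterior domain, combined with explicit estimates on the outgoing fundamental solution of the Helmholtz operator with complex wavenumber. Choose $k$ with $k^2 = i\omega\eps^2$ and $\mathrm{Im}\,k = \sqrt{\omega}\,\eps/\sqrt{2} > 0$, so that $\Phi_k(z) = e^{ik|z|}/(4\pi |z|)$ in three dimensions and $\Phi_k(z) = (i/4) H_0^{(1)}(k|z|)$ in two dimensions, both exponentially decaying at infinity. Since Lemma~\ref{Lem1} (applied with parameter $\omega\eps^2$) ensures $v \in H^1(\mathbb{R}^d \setminus D)$ with exponential decay at infinity, Green's second identity on $B_R \setminus D$ with $R \to \infty$ yields
\begin{equation*}
v(y) = \int_{\partial D} \Big( v(x)\,\partial_{\nu_x} \Phi_k(y-x) - \Phi_k(y-x)\,\partial_\nu v(x) \Big)\,d\sigma(x), \qquad y \in \mathbb{R}^d \setminus \overline{D}.
\end{equation*}

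First I would control the boundary data: Lemma~\ref{Lem1} with parameter $\omega\eps^2$ gives $\|v\|_{H^1(B_2 \setminus D)} \le C(1 + \omega\eps^2)\|v\|_{H^{1/2}(\partial D)}$, and combined with the equation $\Delta v = -i\omega\eps^2 v$ and standard trace theory this produces the Neumann bound $\|\partial_\nu v\|_{H^{-1/2}(\partial D)} \le C(1 + \omega\eps^2) \|v\|_{H^{1/2}(\partial D)}$. Next, for $y = x/\eps$ with $r < |x| < R$ and $x' \in \partial D \subset B_1$, the distance $|y-x'|$ is comparable to $|x|/\eps$. In three dimensions this directly gives $|\Phi_k(y-x')|+|\nabla \Phi_k(y-x')| \le C_r \eps\, e^{-\sqrt{\omega}/4}$, since $\sqrt{\omega}|x|/(2\sqrt{2})$ dominates $\sqrt{\omega}/4$ up to a constant depending on $r$, and the prefactor $1 + \omega\eps^2$ is absorbed by the exponential when $\omega$ is large. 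Plugging into the representation closes the 3D case.

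The two-dimensional case is the main obstacle, because $H_0^{(1)}(k|z|)$ does not decay polynomially and has a logarithmic singularity as $k|z| \to 0$. I would split into regimes. For $\omega \ge 1/2$ the argument $k(y-x')$ has magnitude $\sim \sqrt{\omega} \ge 1/\sqrt{2}$, and the classical bound $|H_0^{(1)}(\zeta)| \lesssim |\zeta|^{-1/2} e^{-\mathrm{Im}\,\zeta}$ yields the factor $e^{-\sqrt{\omega}/\sqrt{2}}$; the extra $1/|\ln \eps|$ must be extracted by renormalizing the $H^{1/2}(\partial D)$ estimate against a reference radial solution whose $H^{1/2}(\partial D)$ norm is of order $|\ln(\sqrt{\omega}\eps)|$. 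For $0 < \omega < 1/2$ the argument is small, and the asymptotic $H_0^{(1)}(\zeta) = (2i/\pi)\log\zeta + O(1)$ makes the effective ratio $H_0^{(1)}(k/\eps)/H_0^{(1)}(k) \sim \log\sqrt{\omega}/\log(\sqrt{\omega}\eps)$ appear, comparable to $\ln\omega/\ln(\omega\eps)$. Making this renormalization rigorous for a general smooth $D$, rather than a ball (where separation of variables would suffice), is the delicate technical step; I would carry it out via a compactness/blow-up argument that reduces the low-frequency 2D case to the exterior Dirichlet problem for the Laplacian and tracks the logarithmic growth of the limiting fundamental solution.
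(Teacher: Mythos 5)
Your overall strategy --- Green's representation against the outgoing fundamental solution $\Phi_k$ with $k^2 = i\omega\eps^2$, controlling the Cauchy data via Lemma~\ref{Lem1} and trace theory, and then reading off the decay of $\Phi_k$ at distance $\sim 1/\eps$ --- matches the paper's, and your treatment of $d=3$ is essentially the same and correct. The gap is in $d=2$, which you yourself flag as ``the delicate technical step'' and then leave unresolved.

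Two things go wrong. First, the statement that you would ``renormalize the $H^{1/2}(\partial D)$ estimate against a reference radial solution whose $H^{1/2}(\partial D)$ norm is of order $|\ln(\sqrt{\omega}\eps)|$'' is not a well-posed operation: $\|v\|_{H^{1/2}(\partial D)}$ is a fixed quantity, and the issue is not to renormalize it but to show that the \emph{coefficient of the growing term} $H_0^{(1)}(k|\cdot|)$ in the far-field expansion of $v$ is small compared with $\|v\|_{H^{1/2}(\partial D)}$. Concretely, after writing the representation on $\partial B_3$, the paper subtracts and adds $G_k(x,0)$ so that
\begin{equation*}
v(x) = \int_{\partial B_3}\Bigl(\bigl[G_k(x,y)-G_k(x,0)\bigr]\partial_r v - \partial_{r_y}G_k(x,y)\,v\Bigr)\,dy + G_k(x,0)\int_{\partial B_3}\partial_r v,
\end{equation*}
and then evaluates this identity at a \emph{fixed} radius (on $\partial B_5$) where Lemma~\ref{Lem1} bounds $|v|$ by $C\|v\|_{H^{1/2}(\partial D)}$ and the bracketed integrals are $O(\|v\|_{H^{1/2}(\partial D)})$, while $|G_k(x,0)|\gtrsim |\ln|k||$. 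This forces the monopole moment to satisfy $\bigl|\int_{\partial B_3}\partial_r v\bigr| \le C\,\|v\|_{H^{1/2}(\partial D)}/|\ln|k||$. Once you have this smallness of the moment, evaluating the same identity at $x/\eps$ and using the small- and large-argument asymptotics of $H_0^{(1)}$ produces exactly $\ln\omega/\ln(\omega\eps)$ in the low-frequency regime and $e^{-\sqrt\omega/4}/|\ln\eps|$ in the high-frequency one. Your sketch has no mechanism that produces this quantitative smallness of $\int\partial_r v$; you only observe that the right ratio of Hankel functions ``appears'' in a model radial computation.

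Second, the fallback you propose --- a compactness/blow-up reduction to the exterior Laplace problem --- would not, as stated, deliver the claim. A compactness argument yields a constant $C$ for each fixed regime but gives no control on how $C$ depends jointly on $\eps$ and $\omega$, whereas the whole point of the lemma is the explicit, uniform $(\eps,\omega)$-dependence $e(\eps,\omega,2)$. Moreover the limiting exterior Laplace problem in $\mathbb{R}^2$ does not decay at infinity, so the natural compactness argument loses precisely the logarithmic information you need. The paper's constructive trick with $G_k(x,0)$ sidesteps both issues and is the missing ingredient in your proposal.
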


Recall that $e(\eps, \omega, d)$ is given in \eqref{er-3d} and \eqref{er-2d}. 

\begin{proof}  By Lemma~\ref{Lem1}, one might assume that $\eps < 1/16$.  By the representation formula, we have 
\begin{equation}\label{Lem1-representation}
	v (x ) = \int_{\partial B_3} \Big(G_{k} (x, y) \partial_r v (y) - \partial_{r_y} G_{k} (x, y) v(y)\Big) \, dy \mbox{ for } x \in
	\mR^3 \setminus B_4,  
\end{equation}
where
$$
G_k(x, y) = \frac{e^{ik |x-y|}}{4 \pi |x -y|} \mbox{ for } x \neq y  \mbox{ if } d = 3 \quad \mbox{ and } \quad G_k(x, y) =  \frac{i}{4}H^{(1)}_0 (k|x-y|)
\mbox{ if } d= 2,  
$$
with $k = e^{i \pi/4} \eps \omega^{1/2}. $ Recall that 
\begin{equation}\label{Lem-Cor-p1}
	H_0^{(1)} (z) = \frac{2i}{\pi}\ln\frac{|z|}{2} + 1  +\frac{2i\gamma}{\pi}+O(|z|^2\log|z|)\quad \text{ as } z\rightarrow 0, z\notin(-\infty,0], 
\end{equation}
and 	
\begin{equation}\label{Lem-Cor-p2}
	H_0^{(1)} (z) = \sqrt{\frac{2}{\pi z}} e^{i(z+\frac{\pi}{4})} (1+O(|z|^{-1})) \quad z\rightarrow \infty, z\notin(-\infty,0].
\end{equation}
The conclusion in  both the case where $d=3$ and the case where $d=2$ and $\omega > \eps^{-1/2} /2$ then follows from Lemma~\ref{Lem1}.

We next deal with  the case where $d=2$ and $\omega < \eps^{-1/2}/2$, hence $|k| < 1/2$.  Using \eqref{Lem1-representation}, we have, for $x \in \partial B_5$, 
\begin{equation*}
v(x) = \int_{\partial B_3} \Big( \big[G_{k} (x, y) - G_k(x, 0) \big]\partial_r v (y) - \partial_{r_y} G_{k} (x, y) v(y)\Big) \, dy + \int_{\partial B_3} G_k(x, 0)  \partial_r v (y)  \, dy. 
\end{equation*}
Using the fact 
$$
|\nabla_y G_k(x, y)| \le C |k| \mbox{ for } x \in \partial B_5 \mbox{ and } y \in B_3, 
$$
we derive that 
\begin{equation}\label{Lem1-est-vr}
\Big| \int_{\partial B_3}  \partial_r v (y)  \, dy \Big| \le C  \| g\|_{H^{1/2}(\partial D)} |\ln^{-1} |k| |. 
\end{equation}
Again using  \eqref{Lem1-representation}, we have, for $r < |x| < R$,  
\begin{equation*}
v(x/\eps) = \int_{\partial B_3} \Big( \big[G_{k} (x/\eps, y) - G_k(x/\eps, 0) \big]\partial_r v (y) - \partial_{r_y} G_{k} (x/\eps, y) v(y)\Big) \, dy + \int_{\partial B_3} G_k(x/ \eps, 0)  \partial_r v (y)  \, dy. 
\end{equation*}
Combining  \eqref{Lem-Cor-p1} and \eqref{Lem1-est-vr}, we obtain, for $0 < \omega < 1$,  
\begin{equation*}
|v(x/ \eps)| \le \frac{C |\ln \omega|}{|\ln|k||} \|g \| _{H^{1/2}(\partial D)}, 
\end{equation*}
and combining  \eqref{Lem-Cor-p2} and \eqref{Lem1-est-vr},  for $1 < \omega < \eps^{-1/2}/2$, we have 
\begin{equation*}
|v(x/ \eps)| \le C  |\ln |k||^{-1} e^{-\sqrt{\omega}/4} \|g \| _{H^{1/2}(\partial D)}.
\end{equation*}
The conclusion in the case where $d=2$ and $\omega < \eps^{-1/2}$ follows. 
The proof is complete. 
\end{proof}

\subsection{Proof of Proposition~\ref{main}} Multiplying the equation of $v_\eps$ by $\bar v_\eps$ and integrating in $\Omega$, we derive that 
\begin{equation}
\int_{\Omega} \langle A_\eps \nabla v_\eps, \nabla v_\eps \rangle +  \omega \int_{\Omega} \rho_\eps |v_\eps|^2 \le C \| g\|_{L^2(\Omega)}^2. 
\end{equation}
Here we used Poincar\'e's inequality 
$$
\| v_\eps \|_{L^2(\Omega)} \le C \| \nabla v_\eps\|_{L^2(\Omega)}. 
$$
In this proof, $C$ denotes a positive constant independent of $\eps$, $A_\eps$, $\rho_\eps$, $\omega$, and $g$.  It follows that  
\begin{equation}\label{SS}
\| v_{\eps} (\eps \cdot) \|_{H^{1/2}(\partial B_1)}^2 \le C \int_{B_1} |\nabla v_{ \eps}(\eps \cdot)|^2 + |v_{\eps} (\eps \cdot)|^2 \le C \int_{B_\eps} \frac{1}{\eps^{d-2}}|\nabla v_\eps|^2 +  \frac{1}{\eps^d} |v_\eps|^2  \le C (1+\omega^{-1}) \| g\|_{L^2(\Omega)}^2. 
\end{equation}
Let
$$
w_{\eps} =  v_{\eps} - v  \mbox{ in } \Omega \setminus B_\eps. 
$$
Then $w_{\eps} \in H^1(\Omega \setminus B_\eps)$ and  satisfies 
	\begin{equation}\label{comments}
	\left\{ \begin{array}{cl}
\Delta w_{\eps} + i \omega  w_{\eps} = 0  &\mbox{ in } \Omega \setminus B_\eps,  \\[6pt] 
w_{\eps} = v_\eps - v   &\mbox{ on }  \partial B_{\eps},  \\[6pt] 
 w_{\eps} = 0 &\mbox{ on } \partial \Omega.
\end{array}\right. 
\end{equation}
Let $\tw_{\eps}\in H^1(\mR^d \setminus B_{\eps})$ be the unique solution of the system 
\begin{equation}
\left\{ \begin{array}{cl}
\Delta \tw_{\eps} + i \omega  \tw_{\eps} = 0 & \mbox{ in } \mR^d \setminus B_{\eps}, \\[6pt]
 \tw_{\eps} =  w_\eps &  \mbox{ on } \partial B_{\eps}, 
\end{array}\right. 
\end{equation}
and set 
$$
\tW_{\eps}  = \tw_{\eps}(\eps \,  \cdot \, ) \mbox{ in } \mR^d \setminus B_1. 
$$
Then  $\tW_{\eps}  \in H^1(\mR^d \setminus B_{1})$ is the unique solution of the system 
\begin{equation}
\left\{ \begin{array}{cl}
\Delta \tW_{\eps} + i \omega \eps^2  \tW_{\eps} = 0 &  \mbox{ in } \mR^d \setminus B_{1}, \\[6pt]
 \tW_{\eps} = w_\eps (\eps \, \cdot \, )  & \mbox{ on } \partial B_{1}. 
\end{array}\right. 
\end{equation}

Fix $r_0 > 2$ such that $\Omega \subset B_{r_0}$.  By Lemmas \ref{Lem1} and  \ref{Lem-Cor}, we have, for 
$r/2  \le |x| <  r_0$, that  
\begin{equation*}
|\tW_{\eps} (x / \eps)|  \le  C e(\eps, \omega,  d) \|w_\eps (\eps \,  \cdot \, )  \|_{H^{1/2}(\partial B_1)}, 
\end{equation*}
which yields,  for $x \in B_{r_0} \setminus B_{r/2}$, that 
\begin{equation*}
|\tw_{\eps}(x)|
\le  C   e(\eps, \omega,  d) \|w_\eps (\eps \,  \cdot \, )  \|_{H^{1/2}(\partial B_1)}. 
\end{equation*}
Since $\Delta \tw_{\eps} + i \omega \tw_{\eps} = 0$ in $B_{r_0} \setminus B_{r/2}$, it follows that 
\begin{equation}\label{main-p1}
\|\tw_{\eps} \|_{H^1(\Omega \setminus B_{2r/3})}
\le  C  e(\eps, \omega,  d) \|w_\eps (\eps \,  \cdot \, )  \|_{H^{1/2}(\partial B_1)}. 
\end{equation}

Fix $\varphi \in C^2(\mR^d)$  such that  $\varphi = 1$ in $B_{2r/3}$ and $\varphi = 0$ in $\mR^d \setminus B_r$,  and set 
$$
\chi_{\eps} = w_{\eps} - \varphi \tw_{\eps} \mbox{ in } \Omega \setminus B_\eps. 
$$
Then $\chi_{\eps}\in H^1_0(\Omega \setminus B_\eps)$  and satisfies 
\begin{equation*}
 \Delta \chi_{\eps} +i\omega \chi_{\eps} = -\Delta \varphi \tw_{\eps} - 2\nabla \varphi \cdot \nabla \tw_{\eps}   \mbox{ in }  \Omega\setminus B_\eps.
\end{equation*}
Multiplying the equation of $\chi_{\eps}$ by $\bar \chi_{\eps}$ and integrating by parts, we obtain 
\begin{equation*}
\int_{\Omega \setminus B_\eps} |\nabla \chi_{\eps}|^2 \le C \| \tw_{\eps}\|_{H^1(\Omega \setminus B_{2r/3})}^2. 
\end{equation*}
This yields, by Poincar\'e's inequality, 
\begin{equation}\label{main-p2}
\| \chi_{\eps} \|_{H^1(\Omega \setminus B_\eps)}\le C \|\tw_{\eps} \|_{H^1(\Omega \setminus B_{2r/3})}.
\end{equation}
Combining \eqref{main-p1} and \eqref{main-p2} yields 
\begin{equation}\label{w1}
\| w_{\eps} \|_{H^1(\Omega \setminus B_r)}  \le   C e(\eps, \omega,  d) \|w_\eps (\eps \,  \cdot \, )  \|_{H^{1/2}(\partial B_1)}. 
\end{equation}
The conclusion now follows from \eqref{SS}. 
\qed 

\begin{remark} \rm  The estimate in Proposition~\ref{main} is  independent of the coefficients inside $B_\eps$
and is  optimal. In fact, one can choose the coefficients in $B_\eps$ such that $v_\eps$ on $\partial B_\eps$ is  as small as one wants.  
\end{remark}

\subsection{Proof of Theorem~\ref{thm1}} Let $v_\eps=u_\eps - u$. Using the fact that  $v_\eps$  is real, by the inversion theorem and Minkowski's inequality, we have, for $t > 0$,  
\begin{align}\label{ss}
\|v_\eps(t, \cdot) \|_{L^2(\Omega \setminus B_2)}  \le C \int_0^\infty \|\hat{v}_\eps (\omega, \cdot) \|_{L^2(\Omega \setminus B_2) } \, d \omega. 
\end{align}
Using Proposition~\ref{main}, we get  
\begin{align*}
 \int_0^\infty \|\hat{v}_\eps (\omega, \cdot) \|_{L^2(\Omega \setminus B_2) } \, d \omega  &\le C \int_0^\infty (1+\omega^{-1/2})e(\eps,\omega,d)\| \hat{f}(\omega)+u_0\|_{L^2(\Omega \setminus B_2) } \, d \omega \\[6pt]
& \le C  \mbox{esssup}_{\omega > 0} \|\hat{f}(\omega)+u_0\|_{L^2(\Omega \setminus B_2) }  \int_0^\infty  (1+\omega^{-1/2})e(\eps,\omega,d) \, d \omega \\[6pt]
& \le Ce(\eps,d) \big(\|f\|_{L^1\big( (0, + \infty);  L^2(\Omega) \big)} +\|u_0\|_{L^2(\Omega)} \big).  
\end{align*}
It follows from \eqref{ss} that, for $t > 0$,  
\begin{align*}
\|v_\eps(t, \cdot) \|_{L^2(\Omega \setminus B_2)}  \le Ce(\eps,d) \big(\|f\|_{L^1\big( (0, + \infty);  L^2(\Omega) \big)} +\|u_0\|_{L^2(\Omega)} \big).  
\end{align*}
Similarly, we have, for $t > 0$,  
\begin{equation*}
\|\nabla v_\eps(t, \cdot) \|_{L^2(\Omega \setminus B_2)}   \le Ce (\eps,d) \big(\|f\|_{L^1\big( (0, + \infty);  L^2(\Omega) \big)} +\|u_0\|_{L^2(\Omega)} \big).  
\end{equation*}
The conclusion follows. \qed

\end{document}